\documentclass[11pt,a4paper]{amsart}
\usepackage[T1]{fontenc}
\usepackage{lmodern}
\usepackage[british]{babel}
\usepackage{a4wide}
\usepackage{amsmath,amssymb}
\usepackage{dsfont}
\usepackage{url}
\usepackage[colorlinks=true,linkcolor=blue,citecolor=blue,urlcolor=blue]{hyperref}

\newtheorem{maintheorem}{Theorem}

\newtheorem{theorem}{Theorem}[section]
\newtheorem{lemma}[theorem]{Lemma}

\theoremstyle{definition}
\newtheorem{definition}[theorem]{Definition}
\theoremstyle{remark}
\newtheorem{question}[theorem]{Question}
\newcommand{\F}{\mathcal F}
\newcommand{\Lipzero}{\operatorname{Lip}_0}
\newcommand{\N}{\mathbb N}
\newcommand{\R}{\mathbb R}
\newcommand{\Z}{\mathbb Z}
\newcommand{\norm}[1]{\left\lVert #1\right\rVert}
\newcommand{\diam}{\operatorname{diam}}
\newcommand{\dist}{\operatorname{dist}}
\newcommand{\dimA}{\dim_{\mathrm A}}

\title{Kwapie\'n's property (K) in Lipschitz-free spaces}
\author[T.~Kania]{Tomasz Kania}
\address[T.~Kania]{Mathematical Institute\\Czech Academy of Sciences\\
\v Zitn\'a 25\\115 67 Praha 1\\Czech Republic and
Institute of Mathematics and Computer Science\\Jagiellonian University\\
{\L}ojasiewicza 6, 30-348 Krak\'{o}w, Poland}
\email{kania@math.cas.cz, tomasz.marcin.kania@gmail.com}
\thanks{RVO: 67985840.}
\author[J.~K{\k a}kol]{Jerzy K{\k a}kol}
\address[J.~K{\k a}kol]{Department of Functional Analysis\\
Faculty of Mathematics and Computer Science\\
Adam Mickiewicz University\\Uniwersytetu Pozna\'nskiego 4\\
61-614 Pozna\'n, Poland}
\email{kakol@amu.edu.pl}
\date{}
\subjclass[2020]{Primary 46B20, 46E15; Secondary 46A20, 28A80}
\keywords{Lipschitz-free space, Kwapie\'n's property (K), Mackey topology,
  strongly weakly compactly generated space, Cantor set, Assouad dimension}
\hypersetup{pdftitle={Kwapie\'n's property (K) in Lipschitz-free spaces},
  pdfauthor={Tomasz Kania and Jerzy K{\k a}kol}}

\begin{document}
\begin{abstract}
We prove that a Lipschitz-free space $\F(M)$ fails Kwapie\'n's property (K)
whenever its underlying metric space $M$ contains a bi-Lipschitz copy of
$C\times K$, where $C\subset\R$ is closed with positive measure and
$K$ contains a Cantor metric space. Consequently, this holds for every subset
of $\R^n$, $n\geqslant2$, whose closure has positive measure, and for a
compact planar set of Assouad dimension one. These Lipschitz-free spaces
embed into no strongly weakly compactly generated space and no $L_1$-space;
in the separable case their duals are not isomorphic to $\ell_\infty$.
Combining our result with a decomposition of Aliaga and Medina, we show
that $\F(\R^n\times\Z^m)$ is strongly weakly compactly generated if and
only if it has (K), and this occurs precisely when $n\leqslant1$.
In particular, $\F(\R^2)$ does not embed isomorphically into
$\F(\R\times\Z)$, answering a question of Aliaga. The planar examples
nevertheless have the Dunford--Pettis property and Pe{\l}czy\'nski's
property $(V^*)$ and are weakly sequentially complete.
\end{abstract}
\maketitle

\section{Introduction}

Kwapie\'n's property (K), in the formulation of Kalton and
Pe{\l}czy\'nski \cite[Section~2]{KP}, concerns convex blocks of weak*-null
sequences in a dual Banach space. A Banach space $X$ has property (K)
if every such sequence admits a convex block subsequence converging to
zero uniformly on each weakly compact subset of $X$. Equivalently, the
blocks converge to zero in the Mackey topology $\mu(X^*,X)$.
Avil\'es and Rodr\'iguez \cite[Corollary~2.3]{AR} show that every subspace
of a strongly weakly compactly generated (SWCG) space has (K). Finite-measure
$L_1$-spaces are strongly weakly compactly generated \cite{SW}, so their
subspaces have (K). Kwapie\'n's argument in Kalton and
Pe{\l}czy\'nski \cite[Section~2]{KP} already shows that finite-measure
$L_1$-spaces have (K).

The Lipschitz-free space of $I=[0,1]$ is isometric to $L_1[0,1]$
\cite[Corollary~3.3]{Godard} and
therefore has (K). We show that already a Cantor family of suitable
subsets of an interval forces failure of (K). The proof constructs a
uniformly null, uniformly Lipschitz sequence whose every convex block
subsequence is detected on one fixed fibre. The fibre depends on the
whole block sequence. No linear extension operator on the ambient
metric space is required.

All Banach spaces are real, $\N=\{1,2,\ldots\}$, and $\cong$ denotes a
Banach space isomorphism. A \emph{Cantor metric space} is a metric space homeomorphic
to $\{0,1\}^{\N}$. On $C\times K$, where $C\subset\R$ and
$(K,d_K)$ is a metric space, we use the metric
\[
 d_2((x,t),(y,u))=\bigl(|x-y|^2+d_K(t,u)^2\bigr)^{1/2}.
\]
Equivalent standard product metrics give the same conclusions.
We equip $\R^n\times\Z^m$ with the Euclidean metric inherited from
$\R^{n+m}$. Any other norm on $\R^{n+m}$ gives an isomorphic
Lipschitz-free space, since the identity map is bi-Lipschitz.

\begin{maintheorem}\label{thm:product}
Suppose that a metric space $M$ contains a bi-Lipschitz copy of
$C\times K$, where $C\subset\R$ is closed and has positive Lebesgue
measure, and $K$ contains a Cantor metric space. Then $\F(M)$ fails
property (K) and is isomorphic to no subspace of a strongly weakly
compactly generated space or of any $L_1$-space. If $M$ is separable,
then $\Lipzero(M)\not\cong\ell_\infty$.
\end{maintheorem}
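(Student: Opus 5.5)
The plan is to reduce to a single concrete model, build an explicit bad sequence there, and then derive the embedding consequences from known permanence results.

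\emph{Reductions.} If $N\subset M$ then $\F(N)$ is isometric to a subspace of $\F(M)$, and a bi-Lipschitz bijection induces an isomorphism of free spaces. Property (K) passes to subspaces. Hence it suffices to show that $\F(C\times\Delta)$ fails (K), where $\Delta\subset K$ is a Cantor metric space. Fix a base point $(c_0,t_0)$ with $c_0=\min C$; after translating and truncating, assume $C\subset[0,1]$. The three conclusions then follow from this single failure.
\begin{itemize}
\item \emph{SWCG spaces.} By \cite[Corollary~2.3]{AR}, every subspace of an SWCG space has (K).
\item \emph{$L_1$-spaces.} Suppose $\F(M)$ embeds into some $L_1(\mu)$. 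The separable subspace $\F(C\times\Delta)$ then lies in $L_1(\nu)$ for a $\sigma$-finite, hence equivalently finite, measure $\nu$. That space is SWCG \cite{SW}, so the subspace would have (K), which is a contradiction.
\item \emph{Duals.} Let $M$ be separable and suppose $\Lipzero(M)\cong\ell_\infty$. Then $\F(M)^*$ is an $\mathcal L_\infty$-space, so $\F(M)$ is an $\mathcal L_1$-space by Lindenstrauss--Rosenthal. Its bidual is then complemented in some $L_1(\mu)$, so $\F(M)$ embeds into an $L_1$-space, which contradicts the previous item.
\end{itemize}

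\emph{Construction.} Choose a strictly increasing sequence of levels of a clopen tree in $\Delta$. To each $t\in\Delta$ this assigns a digit sequence $(t_k)_k\in\{0,1\}^{\N}$. Arrange the levels so that two points whose digits first differ at the $k$-th chosen level are at distance at least $\delta_k>0$. For the $x$-direction, let $r_k$ be the Rademacher functions and put
\[
H_k(x)=\int_{c_0}^x r_k\mathds 1_C .
\]
Each $H_k$ is $1$-Lipschitz with $H_k(c_0)=0$, and $\norm{H_k}_\infty\to0$, uniformly in $x$, by the Riemann--Lebesgue lemma. Passing to a subsequence of indices, we may assume $\norm{H_k}_\infty\leqslant\eta_k$, where $2\eta_{k}\leqslant\delta_{k-1}$. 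Set
\[
\kappa_n(t)=\min\{k\geqslant n:t_k=1\}
\]
(with $H_\infty:=0$), and define
\[
f_n(x,t)=H_{\kappa_n(t)}(x).
\]
Then $f_n$ is $1$-Lipschitz in $x$. Suppose $t,u$ agree up to level $m$ and differ at level $m+1$. If $\kappa_n(t)\leqslant m$, then $\kappa_n(u)=\kappa_n(t)$ and $f_n(x,t)=f_n(x,u)$. Otherwise both $\kappa_n(t)$ and $\kappa_n(u)$ exceed $m$, so
\[
|f_n(x,t)-f_n(x,u)|\leqslant2\eta_{m+1}\leqslant d(t,u).
\]
Combining the two directions with the triangle inequality gives a uniform Lipschitz bound. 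Since $\norm{f_n}_\infty\leqslant\eta_n\to0$, the sequence is bounded and converges to $0$ pointwise on molecules, so it is weak$^*$-null in $\Lipzero$.

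\emph{Killing every convex block subsequence.} Let $g_j=\sum_{k\in(p_{j-1},p_j]}a_kf_k$ be any convex block subsequence. Choose the single fibre point $t^*\in\Delta$ whose digits equal $1$ exactly at the positions $p_j$. Then $\kappa_k(t^*)=p_j$ for every $k$ in block $j$, and therefore
\[
g_j(\cdot,t^*)=H_{p_j}.
\]
The fibre $C\times\{t^*\}$ is isometric to $C$, and $\F(C)\cong L_1(C)$ via $\varphi\mapsto\int\varphi\,\delta'$. The functions $r_{p_j}\mathds 1_C$ are bounded in $L_\infty$, hence uniformly integrable. Thus
\[
W=\{0\}\cup\{r_{p_j}\mathds 1_C\}_j
\]
is weakly compact in $L_1(C)$, and its image in $\F(C\times\{t^*\})\subset\F(C\times\Delta)$ is weakly compact as well. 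For each $j$,
\[
\langle g_j,r_{p_j}\mathds 1_C\rangle=\int_C r_{p_j}^2=|C|>0.
\]
Hence $(g_j)$ does not tend to zero uniformly on $W$, and (K) fails.

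The main obstacle is the compatibility in the construction. The index map $\kappa_n$ must be varied across $\Delta$ richly enough that every block sequence can be collapsed, on some fibre, to a single Rademacher primitive. At the same time the $t$-Lipschitz estimate must survive. This is exactly what the interlocking choices $2\eta_k\leqslant\delta_{k-1}$ and the "first $1$ after $n$" rule are designed to achieve, and I would check that estimate carefully first.
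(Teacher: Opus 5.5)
\textbf{Verdict: same construction as the paper, but the reduction step has a genuine, easily repaired gap.}

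Your construction is the paper's. You use the same primitives $H_k$ of $\mathds 1_C r_k$ and the same rule ``first digit $1$ at a position $\geqslant n$''. You also use the same fibre point $t^*$, with digits $1$ exactly at the block maxima $p_j$, which collapses every convex block to a single $H_{p_j}$ on one fibre.

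The gap is in your reduction, which rests on the claim that property (K) passes to subspaces. That is false for general Banach spaces. The space $\ell_\infty$ has (K): it is a Grothendieck space with the Dunford--Pettis property, so weak*-null sequences in $\ell_\infty^*$ are weakly null and pair to zero against weakly null sequences. Yet $c_0\subset\ell_\infty$ fails (K); pair convex blocks of $(e_n^*)$ with the block sums $\sum_{n\in E_k}e_n$. To pass (K) from $X$ to a subspace $Y$, you must lift a weak*-null sequence in $Y^*$ to a weak*-null sequence in $X^*$. Hahn--Banach plus weak*-sequential compactness of $B_{X^*}$ does this when $M$ is separable. The theorem, however, allows arbitrary $M$, and then $B_{\F(M)^*}$ need not be weak*-sequentially compact. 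So, as written, you have shown that $\F(C\times\Delta)$ fails (K), not that $\F(M)$ does.

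The missing step is an extension of your sequence to all of $M$ that remains weak*-null. A bare McShane extension does not do this, since it tends to $L\,\dist(p,Q)$ off $Q=\Phi(C\times\Delta)$. The paper truncates the McShane extension of $f_m\circ\Phi^{-1}$ at $\pm\varepsilon_m$. Truncation is $1$-Lipschitz and fixes the values on $Q$, because $\norm{f_m}_\infty\leqslant\varepsilon_m$. Hence the extensions stay uniformly Lipschitz and uniformly null, and the fibre pairing transfers through $x\mapsto\Phi(x,t^*)$. You already have $\norm{f_n}_\infty\leqslant\eta_n\to0$, so this repair is immediate. For free spaces over subsets, the heredity you want does in fact hold, for example by taking the median of $0$ and the upper and lower McShane extensions with doubled constant. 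It needs such an argument, though, not a general subspace principle.

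Your SWCG, $L_1$ and dual conclusions only use failure of (K) for $\F(C\times\Delta)$, so they are fine. Your dual argument does not even need separability of $M$.

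Two smaller slips need fixing.
\begin{enumerate}
\item The separation condition is off by one. If $t,u$ first differ at level $m+1$, you only know $d(t,u)\geqslant\delta_{m+1}$. Your condition $2\eta_k\leqslant\delta_{k-1}$ gives only $2\eta_{m+1}\leqslant\delta_m$. You need $2\eta_k\leqslant\delta_k$ with $(\eta_k)$ nonincreasing, as in the paper's $\varepsilon_b=\min(2^{-b},d_1/2,\ldots,d_b/2)$.
\item The map $\varphi\mapsto\int\varphi\,\delta'$ is not onto $\F(C)$ when $C$ has gaps, since the gaps contribute an $\ell_1$-summand. It is an isometric embedding, which is all you need. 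Showing that it lands in $\F(C)$, and that pairing with $H_k|_C$ gives $\int_C r_k^2$, uses the affine extension $\Lambda$ across gaps and the identity $\Lambda(H_k|_C)=H_k$. This is the content of Lemma~\ref{lem:fibre}.
\end{enumerate}
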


In particular, the theorem applies to $I\times K$ for every uncountable
compact metric space $K$, to every cube $I^n$ with $n\geqslant2$, and to
every normed space of dimension at least two. Indeed,
every uncountable compact metric space contains a Cantor subspace.

An application to the isomorphic classification of Lipschitz-free
spaces follows from Aliaga and Medina's decomposition
\cite[Proposition~3.14]{AM}: for $n,m\geqslant1$, the space
$\F(\R^n\times\Z^m)$ is isomorphic to the direct sum of
$\F(\R^n)$ and $\F(\Z^{n+m})$. Their proof uses the maximum norm;
the preceding norm equivalence gives the Euclidean formulation used
here. Throughout, $\R^0$ and $\Z^0$ denote singletons.

\begin{maintheorem}\label{thm:bundles}
Let $n,m\geqslant0$ be integers. The following conditions are equivalent:
\begin{enumerate}
\item $\F(\R^n\times\Z^m)$ is strongly weakly compactly generated;
\item $\F(\R^n\times\Z^m)$ has property (K);
\item $n\leqslant1$.
\end{enumerate}
Moreover, if $d\geqslant2$ and $m\geqslant0$, then $\F(\R^d)$ does not
embed isomorphically into $\F(\R\times\Z^m)$.
\end{maintheorem}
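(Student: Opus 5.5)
The plan is to establish the cycle (1)$\Rightarrow$(2)$\Rightarrow$(3)$\Rightarrow$(1), and then to deduce the non-embedding statement from it.

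The implication (1)$\Rightarrow$(2) is exactly the result of Avil\'es and Rodr\'iguez \cite[Corollary~2.3]{AR}: every subspace of an SWCG space, in particular the space itself, has (K).

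For (2)$\Rightarrow$(3), I argue by contraposition. Suppose $n\geqslant2$. Then $\R^n\times\Z^m$ contains an isometric copy of $[0,1]^2=I\times I$, namely the slice $I\times I\times\{0\}^{n-2}\times\{0\}^m$ with the Euclidean metric. Here $I$ is closed with positive measure and contains the Cantor ternary set, so Theorem~\ref{thm:product} shows that $\F(\R^n\times\Z^m)$ fails (K).

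The substantive direction is (3)$\Rightarrow$(1). Assume $n\leqslant1$ and treat the cases separately.
\begin{itemize}
\item If $n=0$, then $\R^0\times\Z^m$ is isometric to $\Z^m$, which is uniformly discrete and countable. A Lipschitz-free space over a uniformly discrete space has the Schur property. More concretely, $\F(\Z^m)$ has a Schauder basis and is weakly sequentially complete; I would rather use the known fact that for $m\geqslant1$ it is isomorphic to a complemented subspace of, or is itself a, space with a suitable structure. The cleanest route is this: $\F(\Z^m)$ is separable with the Schur property. In a Schur space, weakly compact sets are norm compact. In a separable space the norm compact sets are generated by a single compact set, namely the closed absolutely convex hull of $\{x_k/k\}$ for a dense sequence $(x_k)$, and that hull strongly generates the norm compacta. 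This gives SWCG. If $m=0$ the space is trivial.
\item If $n=1$ and $m=0$, then $\F(\R)\cong L_1(\R)\cong L_1[0,1]$ by Godard, and finite-measure $L_1$ is SWCG \cite{SW}.
\item If $n=1$ and $m\geqslant1$, Aliaga--Medina \cite[Proposition~3.14]{AM}, combined with the Euclidean/max-norm equivalence, gives $\F(\R\times\Z^m)\cong\F(\R)\oplus\F(\Z^{1+m})$. This is a direct sum of two SWCG spaces, and finite direct sums of SWCG spaces are SWCG: take the product of the generating weakly compact sets.
\end{itemize}

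The main obstacle is justifying the Schur property of $\F(\Z^k)$. I would cite the theorem that $\F(M)$ has the Schur property for uniformly discrete $M$ (Kalton), or equivalently that $\F(\Z^k)$ embeds as a Schur space. I would also need to check that "separable Schur implies SWCG" is valid. The verification is as follows. If $W$ is weakly compact, it is norm compact, so $W\subset\overline{\mathrm{aco}}\{y_j\}$ with $\|y_j\|\to0$. I choose a fixed compact generator $G$ built from a dense sequence scaled suitably. The condition that for each $\varepsilon$ there is $N$ with $W\subset NG+\varepsilon B_X$ then follows from total boundedness of $W$: cover $W$ by finitely many $\varepsilon$-balls centred at points of the dense sequence, and take $N$ large enough.

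Finally, for the non-embedding statement, let $d\geqslant2$. If $\F(\R^d)$ embedded into $\F(\R\times\Z^m)$, it would be a subspace of an SWCG space, since $\F(\R\times\Z^m)$ is SWCG by (3)$\Rightarrow$(1) with $n=1$. This contradicts Theorem~\ref{thm:product} applied to $M=\R^d$, which contains an isometric copy of $I\times I$.
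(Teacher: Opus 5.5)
Your proposal is correct and follows the paper's proof essentially step for step. You use Avil\'es--Rodr\'iguez for (1)$\Rightarrow$(2), the isometric copy of $I^2$ with Theorem~\ref{thm:product} for (2)$\Rightarrow$(3), and for (3)$\Rightarrow$(1) Kalton's Schur property for $\F(\Z^k)$, Godard's identification $\F(\R)\cong L_1$, and the Aliaga--Medina decomposition with stability of SWCG under finite direct sums; the non-embedding then follows from the same ``subspace of an SWCG space'' argument. The only detail to tidy is the Schur case: take the dense sequence in the unit ball or sphere, so that $x_k/k\to0$ and its closed absolutely convex hull is genuinely norm compact, as in Lemma~\ref{lem:SWCG}.
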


In particular, $\F(\R\times\Z)\not\cong\F(\R^2)$. This answers
\cite[Question~5]{Aliaga}, also recorded in \cite[Section~3.5]{AM}
and \cite[Problem~2]{Mason}. Mason's recent Dunford--Pettis theorem
\cite[Theorem~1 and Corollary~2]{Mason} shows that both spaces have
the Dunford--Pettis property, so that property cannot distinguish them.
Property (K) does, and the SWCG conclusion yields the stronger
nonembeddability result.
In the original topological formulation of SWCG, Theorem~\ref{thm:bundles}
also says that the closed unit ball of $\Lipzero(\R^n\times\Z^m)$
is metrisable in its Mackey topology precisely when $n\leqslant1$.

An Eggleston-type product lemma gives a further geometric consequence.

\begin{maintheorem}\label{thm:measure}
Every subset $E\subset\R^n$ whose closure has positive Lebesgue measure,
where $n\geqslant2$, satisfies all the conclusions of
Theorem~\ref{thm:product}.
\end{maintheorem}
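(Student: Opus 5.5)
The plan is to reduce Theorem~\ref{thm:measure} to Theorem~\ref{thm:product}. The key step is an Eggleston-type product lemma: the closure $\overline E$ contains, up to an isometry of $\R^n$, a set of the form $C\times K$, where $C\subset\R$ is closed with positive measure and $K\subset\R^{n-1}$ is a Cantor set. On $\R\times\R^{n-1}$ the Euclidean metric coincides with $d_2$, so $C\times K$ sits isometrically in $\overline E$, and Theorem~\ref{thm:product} then applies to $\F(\overline E)$. Since Lipschitz functions extend uniquely from $E$ to $\overline E$ with the same constant, $\F(E)=\F(\overline E)$ isometrically (and $\Lipzero(E)=\Lipzero(\overline E)$), so all conclusions transfer to $E$. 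Separability is automatic for subsets of $\R^n$, which covers the $\ell_\infty$ clause.

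To prove the product lemma, write $F=\overline E$, which has $\lambda_n(F)>0$. Replace $F$ by a compact subset of positive measure, and write points as $(x,y)$ with $x\in\R$, $y\in\R^{n-1}$. By Fubini the set
\[
A=\{y\in\R^{n-1}:\lambda_1(F^y)>0\}, \qquad F^y=\{x:(x,y)\in F\},
\]
has positive $\lambda_{n-1}$-measure. Choosing $\delta>0$ small enough, the set $A_\delta=\{y:\lambda_1(F^y)\geqslant\delta\}$ also has positive measure. Since $F$ is compact, the map $y\mapsto\lambda_1(F^y)$ is upper semicontinuous, so $A_\delta$ is closed, hence compact, and uncountable because $n-1\geqslant1$.

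Next I need uncountably many $y$ whose fibres share a common set of positive measure, and I would argue as follows. Fix a countable base $\{U_j\}$ of open sets in $\R$. For $y\in A_\delta$, approximate the compact fibre $F^y$ from outside by finite unions of basic intervals. Since there are only countably many such finite unions, some finite union $V$ satisfies $\lambda_1(V)<\tfrac32\delta$ and has the property that the set $B=\{y\in A_\delta: F^y\subset V\}$ is uncountable; indeed $A_\delta$ is a countable union of such sets $B$, so one of them is uncountable.

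This alone does not give a common positive-measure subset of the fibres. The more robust route is the standard inductive Cantor-scheme argument. Build a tree of compact sets $K_s\subset\R^{n-1}$, indexed by finite binary strings $s$, each of positive measure with diameters tending to zero, together with closed sets $C_s\subset\R$ that decrease along branches, such that $\lambda_1(C_s)\geqslant\delta(1-\sum_{i\leqslant|s|}2^{-i-1})$ and $C_s\times K_s\subset F$ up to a measure-small error that is removed at each stage. Splitting a set $K_s$ uses Fubini in the form
\[
\int_{K_s}\lambda_1\Bigl(C_s\cap\bigcap_{y\in\text{finite set}}F^y\Bigr)\,dy,
\]
or, more concretely, a Lebesgue density argument. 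Pick two density points $y_0,y_1$ of $K_s$. Near them, the set of $y$ for which $\lambda_1(C_s\setminus F^y)$ is small has large relative measure. Choose small balls $K_{s0},K_{s1}$ around them, and set $C_{s0}=C_{s1}=C_s\cap F^{y_0}\cap F^{y_1}\cap\cdots$. Finally put $C=\bigcap_k\bigcap_{|s|=k}C_s$ and $K=\bigcap_k\bigcup_{|s|=k}K_s$; then $C\times K\subset F$, since $F$ is closed and every point of $K$ is a limit of the chosen centres.

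The main obstacle is exactly this step: making $C$ a single closed set of positive measure that works simultaneously for the whole Cantor set $K$, while the number of constraints doubles at each stage. I would handle it with the summable loss budget $\delta 2^{-k-2}$ at stage $k$. Among the $2^{k}$ chosen points I use closedness of $F$ to pass from the chosen centres to their limit points. When shrinking $C_s$ at stage $k$, I would take Lebesgue-density and regularity approximations so that each of the $2^k$ intersections costs at most $\delta 4^{-k-2}$.
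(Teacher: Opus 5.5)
Your reduction ($\overline E$, the identification $\F(E)=\F(\overline E)$, and the isometric copy of $C\times K$ in $\R\times\R^{n-1}$) is the paper's. So are the summable loss budget and the final step, which uses closedness of $F$ and convergence of the centres along branches.

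The gap is in the splitting step, which you flag as the main obstacle but do not resolve. Your invariant controls $\phi_s(y)=\lambda_1(C_s\setminus F^y)$ only at the current error level on most of $K_s$. Since $C_{si}\subset C_s$ you get $\phi_{si}\leqslant\phi_s$, which \emph{transmits} that error to the children but never \emph{improves} it. The budget, however, needs centres with $\phi_s(y_{si})\leqslant\delta4^{-k-2}$ at level $k$, so the error on $K_{si}$ must shrink geometrically with $k$. Nothing in your sketch produces this. A density point of $K_s$ carries no information about fibres. Even a Lebesgue point $y_i$ of $\phi_s$ only gives averages near $\phi_s(y_i)$, not near $0$. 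The fact $C_{si}\subset F^{y_i}$ would help only if
\[
 \frac{1}{\lambda_{n-1}(B_r(y_i))}\int_{B_r(y_i)}\lambda_1\bigl(F^{y_i}\setminus F^y\bigr)\,dy\longrightarrow0\qquad(r\to0),
\]
that is, only if $y\mapsto\mathds{1}_{F^y}\in L_1(\R)$ is approximately continuous at the chosen centres. This is the missing idea.

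The paper obtains it via Lusin's theorem. The map $\Psi(y)=\mathds{1}_{E_y}$ is measurable into $L_1(\R)$, hence continuous on a compact set $P$ of positive measure, which may be taken without isolated points. Children $y_{si}$ with $\norm{\Psi(y_{si})-\Psi(y_s)}_1<a2^{-2|s|-4}$ then exist by continuity and perfectness, and $C=\bigcap_sE_{y_s}$ loses at most $a/4$. There are no sets $C_s$ or positive-measure pieces $K_s$ to track.

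Within your framework there are two possible repairs.
\begin{enumerate}
\item Choose each centre $y_i$ to be a density point of the section $\{y:(x,y)\in F\}$ for almost every $x\in F^{y_i}$. Almost every $y$ qualifies, by Fubini applied to the set of such pairs $(x,y)$, and dominated convergence then gives the display.
\item Iterate your discarded countable-base argument. At each node of level $k$, pass to an uncountable subfamily $B_s$ with a common finite union $V_s$ of rational intervals such that $F^y\subset V_s$ and $\lambda_1(V_s\setminus F^y)<\delta4^{-k-2}$ for all $y\in B_s$. Take the children's subfamilies inside $B_s$, near two distinct condensation points. Then each edge loses $\lambda_1(F^{y_s}\setminus F^{y_{si}})\leqslant\lambda_1(V_s\setminus F^{y_{si}})<\delta4^{-k-2}$, and no measure on $K$ is needed at all.
\end{enumerate}
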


Naor and Schechtman \cite{NS} proved that the $L_1$-distortion of planar
earthmover metrics on square grids tends to infinity, with a lower bound
of order $\sqrt{\log n}$ for grids of side length $n$. Their theorem
implies that $\F(I^2)$ and $\F(\R^2)$ do not embed into $L_1$.
This also implies
$\Lipzero(I^2)\not\cong\ell_\infty$: otherwise $\F(I^2)$ would embed
into $\ell_\infty^*$ and hence, by the measure-domination argument in
Lemma~\ref{lem:Lone}, into a finite-measure $L_1$-space.
Theorem~\ref{thm:product} gives a direct qualitative proof of these
consequences. Its additional scope includes arbitrary positive-measure
sets, products $I\times K$ with null Cantor sets $K$, and the example
below, where square grids of uniformly bounded distortion are unavailable.

\begin{maintheorem}\label{thm:assouad}
There is a compact planar set $M$ with $\dimA M=1$ satisfying all the
conclusions of Theorem~\ref{thm:product}. For each fixed distortion
bound, $M$ contains bi-Lipschitz copies of square grids of only bounded
size, even when arbitrary rescaling is allowed.
The space $\F(M)$ has the Dunford--Pettis property and property $(V^*)$
and is weakly sequentially complete.
\end{maintheorem}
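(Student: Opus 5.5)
The plan is to build $M$ as a product $M=I\times K$, with $I=[0,1]$ and $K\subset[0,1]$ a null Cantor set of Assouad dimension zero, embedded in $\R^2$. Then $M$ is compact and contains the bi-Lipschitz copy $C\times K$ with $C=I$, so Theorem~\ref{thm:product} gives all of its conclusions at once.

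For $K$ we take a very thin Cantor set. Let $K=\bigcap_k K_k$, where $K_k$ is a union of $2^k$ intervals of length $\ell_k$, each interval of $K_{k-1}$ contributing two children at its ends. The ratios $r_k=\ell_k/\ell_{k-1}$ are chosen with $r_k\to0$, say $r_k=1/k$. Then any ball of radius $R$ in $K$ is covered by $N(r,R)\le 2^{j}$ balls of radius $r$, where $j$ is the number of levels between scales $R$ and $r$. Since $\ell_{k+j}/\ell_k\le r_{k+1}\cdots r_{k+j}$ decays superexponentially in $j$ uniformly in $k$, we get $N(r,R)\le C_\varepsilon (R/r)^{\varepsilon}$ for every $\varepsilon>0$, so $\dimA K=0$.

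The Assouad dimension of $M$ follows from the standard bounds $\dimA(I\times K)\le\dimA I+\dimA K=1$ and $\dimA M\ge\dimA I=1$.

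The grid statement is the main obstacle. Suppose $f\colon G\to M$ is an $L$-bi-Lipschitz map of a rescaled grid $G=\{0,\dots,n\}^2$ (scale $s$) into $M$. I would argue that the $K$-coordinate effectively takes few values at each scale.

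1. Pick a level $k$ with $\ell_k\gg$ the distances involved but $\ell_{k+1}$ much smaller. Distinct level-$(k+1)$ pieces of $K$ lying inside one level-$k$ piece are separated by about $\ell_k$.
2. Consecutive grid points have images at distance at most $Ls$. Hence, once $r_{k+1}$ is small relative to $1/(Ln)$, a path in $G$ cannot jump between pieces separated by more than $Ls$ unless the diameter scale forces it.
3. More concretely, compare $\diam f(G)\approx snL^{\pm1}$ with the gap structure of $K$. Choose $k$ so that the gaps between level-$k$ pieces exceed $Ls$ while the pieces have size $\ell_k\ll s/L$; this is possible once $r_k$ is small, since the gap-to-size ratio at level $k$ is about $1/r_k$.
4. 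Connectivity of the grid graph then forces the $K$-coordinate of $f(G)$ to lie in one level-$k$ piece, of size $\ell_k\ll s/L$. So $f(G)$ lies within $o(s/L)$ of a segment $I\times\{t\}$.
5. A set of $(n+1)^2$ points that are $s/L$-separated cannot lie in an $s/(2L)$-neighbourhood of a segment of length at most $Lsn$ once $n\gtrsim L^2$. This bounds $n$ in terms of $L$.

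If no level with ratio good enough for $L$ and $n$ exists at the relevant scale, the finitely many such levels give a bounded exceptional range. Some care is needed since $r_k\to0$ only eventually; I would simply take $r_k$ decreasing from the start.

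Finally, $M$ is a compact subset of $\R^2$, in fact a countable intersection of finite unions of rectangles, and is purely $1$-rectifiable-like in the sense of being contained in a countable union of horizontal segments closure-wise. For the properties of $\F(M)$, I would invoke Mason's Dunford--Pettis theorem \cite{Mason} for compact planar sets of this type, or the general principle that $\F(I\times K)$ for totally disconnected $K$ of Assouad dimension zero behaves like an $\ell_1$-sum of $L_1$-type pieces. That principle yields the Dunford--Pettis property, property $(V^*)$ and weak sequential completeness, all of which pass from $L_1$ to such sums and to the relevant complemented decompositions. This last step, identifying exactly which earlier result gives $(V^*)$ and weak sequential completeness, I am least sure of. I would try to realise $\F(M)$ as isomorphic to a subspace of $L_1$-sums via Assouad-dimension-based decompositions, with the caveat that this must not contradict the non-embedding into $L_1$.
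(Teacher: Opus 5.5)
Your construction and its dimension count match the paper: $M=I\times K$ with $K$ an endpoint Cantor set whose ratios tend to zero, so $\dimA K=0$ and $\dimA M=1$, followed by an appeal to Theorem~\ref{thm:product}. The paper uses ratios $2^{-j-2}$. Your example $r_k=1/k$ needs shifting, since ratios $\geqslant1/2$ make sibling intervals coincide or touch.

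\textbf{Grid step: gap in the level selection.} Small $r_k$ does not give, at the scale $s$ fixed by $f$, a level whose separation exceeds $Ls$ while $\ell_k\ll s/L$. Take $s=\ell_k$ for any $k$. Every level $i\leqslant k$ has pieces of length at least $s$. Every level $i\geqslant k+1$ has sibling gaps shorter than $\ell_k\leqslant Ls$. So there is a window of bad scales, of multiplicative width about $L^2$, around every $\ell_k$. This is not the finite exceptional range you describe.

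Your connectivity argument can be repaired as follows. Take the deepest level whose separation exceeds $Ls$. Its pieces have length $O(Ls)$. Counting $s/L$-separated points in an $O(Lsn)\times O(Ls)$ rectangle then gives $n=O(L^4)$.

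The paper's route is much shorter and needs only $\dimA M<2$, with no scale selection or connectivity. The $(N+1)^2$ image points are $\lambda$-separated in a ball of radius $\sqrt2D\lambda N$. The Assouad bound with exponent $3/2$ at $r=\lambda/3$ gives $(N+1)^2\leqslant A(3\sqrt2DN)^{3/2}$, which bounds $N$.

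\textbf{Banach-space properties: not proved, and the proposed principle cannot hold.} An $\ell_1$-sum of $L_1$-spaces is itself an $L_1$-space. Realising $\F(M)$ inside such sums would therefore contradict the non-embedding into $L_1$ you have just obtained from Theorem~\ref{thm:product}. Moreover, the Dunford--Pettis property does not pass to subspaces. The remark about horizontal segments is also wrong, since $I\times K$ is an uncountable union of them.

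What is missing is a citation, not a decomposition. Kochanek and Perneck\'a \cite{KoP} prove property $(V^*)$ and weak sequential completeness of $\F(M)$ for every compact subset $M$ of a superreflexive space, in particular of $\R^2$. Mason \cite[Corollary~2]{Mason} gives the Dunford--Pettis property for every subset of a finite-dimensional Euclidean space.
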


The Euclidean examples above combine failure of (K) with the Dunford--Pettis
property, property $(V^*)$ and weak sequential completeness
\cite{KoP,APQ,Mason}. By contrast, complete purely $1$-unrectifiable
metric spaces have Schur Lipschitz-free spaces \cite[Theorem~C]{AGPP}
and hence have (K); neither these Schur spaces nor our weakly
sequentially complete examples contain $c_0$.

\section{Preliminaries}

For a pointed metric space $(M,o)$, let $\Lipzero(M)$ be the Banach
space of real Lipschitz functions vanishing at $o$, with norm
$\norm{f}_{\mathrm{Lip}}$. Its canonical predual is the
\emph{Lipschitz-free space}
\[
 \F(M)=\overline{\operatorname{span}}\{\delta_p:p\in M\}
 \subset\Lipzero(M)^*,\qquad \langle f,\delta_p\rangle=f(p).
\]
Changing the base point gives canonically isometric spaces. McShane's
extension theorem \cite[Theorem~1.33]{Weaver} identifies $\F(N)$ isometrically with a closed
subspace of $\F(M)$ whenever $N\subset M$ contains the base point.
Moreover, $\F(M)$ is canonically isometric to $\F(\widehat M)$, where
$\widehat M$ is the metric completion. A uniformly bounded sequence in
$\Lipzero(M)$ is weak*-null if it converges pointwise to zero, by density
of the evaluation functionals.

For a Banach space $X$, write $B_X$ for its closed unit ball. The
\emph{Mackey topology} $\mu(X^*,X)$ on $X^*$ is the locally convex
topology of uniform convergence on weakly compact subsets of $X$.
It is generated by the seminorms
\[
 p_W(f)=\sup_{x\in W}|\langle f,x\rangle|,
 \qquad W\subset X\text{ weakly compact}.
\]
One may equivalently restrict to absolutely convex weakly compact
sets, since the closed absolutely convex hull of a weakly compact
set is weakly compact. Thus $g_k\to0$ in $\mu(X^*,X)$ means that
$p_W(g_k)\to0$ for every such $W$. The weak* topology
$\sigma(X^*,X)$ requires only pointwise convergence on $X$.

A convex block subsequence of $(f_m)$ has the form
\begin{equation}\label{eq:blocks}
 g_k=\sum_{m\in E_k}a_m f_m,\qquad
 a_m\geqslant0,\quad\sum_{m\in E_k}a_m=1,
 \quad\max E_k<\min E_{k+1},
\end{equation}
where the $E_k\subset\N$ are finite and non-empty. The following
standard consequence of Eberlein--\v Smulian connects the Mackey
formulation of (K) with the pairing formulation in \cite{KP}.
We recall the proof for completeness; see also the quantitative
version in \cite[Lemma~3.1]{CKR}.

\begin{lemma}\label{lem:diagonal}
A weak*-null sequence $(g_k)$ in $X^*$ converges to zero in
$\mu(X^*,X)$ if and only if
$\langle g_k,x_k\rangle\to0$ for every weakly null sequence $(x_k)$ in $X$.
\end{lemma}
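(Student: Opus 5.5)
For the forward direction, assume $g_k\to0$ in $\mu(X^*,X)$ and let $(x_k)$ be weakly null in $X$. The set $W=\{x_k:k\in\N\}\cup\{0\}$ is weakly compact, because every weak neighbourhood of $0$ contains all but finitely many $x_k$. Hence
\[
 |\langle g_k,x_k\rangle|\leqslant p_W(g_k)\to0 .
\]
This direction uses neither weak*-nullity nor Eberlein--\v Smulian.

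For the converse I argue by contraposition. Suppose $(g_k)$ is weak*-null but does not converge to zero in $\mu(X^*,X)$. Then there are a weakly compact set $W\subset X$, an $\varepsilon>0$ and an infinite set $N\subset\N$ such that for every $k\in N$ some $w_k\in W$ satisfies $|\langle g_k,w_k\rangle|>\varepsilon$.

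By Eberlein--\v Smulian, $W$ is weakly sequentially compact. Passing to a further infinite subset of $N$, we may therefore assume that $w_k\to w$ weakly for some $w\in W$ as $k\to\infty$ in $N$.

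Now set $x_k=w_k-w$ for $k\in N$ and $x_k=0$ for $k\notin N$. The sequence $(x_k)$ is weakly null in $X$. Weak*-nullity gives $\langle g_k,w\rangle\to0$, so
\[
 |\langle g_k,x_k\rangle|\geqslant|\langle g_k,w_k\rangle|-|\langle g_k,w\rangle|>\varepsilon/2
\]
for all large $k\in N$. Thus $\langle g_k,x_k\rangle\not\to0$, contradicting the hypothesis.

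The only step requiring care is the application of Eberlein--\v Smulian. It is what converts an arbitrary weakly compact set into a weakly convergent sequence. Subtracting the limit $w$, and using weak*-nullity to discard the term $\langle g_k,w\rangle$, then produces a weakly null sequence.

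Boundedness of $(g_k)$ is not needed in this argument. It would in any case follow from the uniform boundedness principle, since $(g_k)$ is weak*-null in a dual space.
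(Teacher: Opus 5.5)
Your proof is correct and follows the paper's argument essentially step for step: the forward direction uses the weakly compact set $\{0,x_1,x_2,\ldots\}$, and the converse uses Eberlein--\v Smulian, subtracts the weak limit, pads with zeros off the chosen indices, and uses weak*-nullity to discard $\langle g_k,w\rangle$. The only difference is bookkeeping: you index by an infinite set $N\subset\N$ where the paper uses a subsequence $(k_j)$.
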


\begin{proof}
One implication follows by applying uniform convergence to the weakly
compact set $\{0,x_1,x_2,\ldots\}$. Conversely, failure of uniform
convergence on a weakly compact set $W$ gives $\varepsilon>0$, indices
$k_j$ and points $x_j\in W$ with
$|\langle g_{k_j},x_j\rangle|\geqslant\varepsilon$. By
Eberlein--\v Smulian, pass to a subsequence such that $x_j\to x$ weakly.
Set $z_{k_j}=x_j-x$ and $z_k=0$ at all other indices. Then $(z_k)$ is
weakly null, whereas weak* convergence gives
$|\langle g_{k_j},z_{k_j}\rangle|\geqslant\varepsilon/2$ eventually.
\end{proof}

The proof of Theorem~\ref{thm:product} uses only the forward implication.
The converse reconciles the two formulations of property (K).

\begin{definition}\label{def:SWCG}
A Banach space $Y$ is \emph{strongly weakly compactly generated}
(SWCG) if the relative Mackey topology $\mu(Y^*,Y)$ on $B_{Y^*}$
is metrisable.
\end{definition}

This is the original definition of Schl\"uchtermann and Wheeler.
By \cite[Theorem~2.1]{SW}, it is equivalent to the existence of a
weakly compact absolutely convex set $H\subset Y$ such that
\begin{equation}\label{eq:strong-generation}
 \forall W\subset Y\text{ weakly compact}\quad
 \forall\varepsilon>0\quad\exists n\in\N:
 \quad W\subset nH+\varepsilon B_Y.
\end{equation}
Here $nH=\{nh:h\in H\}$, and $H$ is said to \emph{strongly generate}
$Y$. We use this characterisation below. Equivalently, there is a
sequence $(A_j)$ of weakly compact absolutely convex subsets of $Y$
such that, for every weakly compact $W\subset Y$ and every
$\varepsilon>0$, some $A_j$ satisfies \mbox{$W\subset A_j+\varepsilon B_Y$}.
This expresses the weakly compact density condition: a countable
family approximates every weakly compact set to arbitrary norm
accuracy. The three equivalent formulations are also recorded in
\cite[Theorem~10]{KLP}; for the density-condition viewpoint, see
\cite[Section~6.7, pp.~177--183]{KKLS}.

The result below for separable Schur spaces is due to
Schl\"uchtermann and Wheeler \cite{SW}; see also
Mercourakis and Stamati \cite[Example~1.3(ii)]{MS}.
We include its short proof and the elementary permanence properties
needed for Theorem~\ref{thm:bundles}.

\begin{lemma}\label{lem:SWCG}
Every separable Schur space is SWCG. Finite direct sums of SWCG spaces
are SWCG, and the property is invariant under isomorphisms.
\end{lemma}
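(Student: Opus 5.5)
We work throughout with the characterisation \eqref{eq:strong-generation} from \cite[Theorem~2.1]{SW}: a Banach space is SWCG exactly when some weakly compact absolutely convex set $H$ strongly generates it. The proof then has three independent parts.

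\emph{Separable Schur spaces.} Let $Y$ be separable and Schur. In a Schur space every weakly compact set is norm compact, because weakly convergent sequences converge in norm. Fix a dense sequence $(y_j)$ in $B_Y$ and put
\[
H=\overline{\operatorname{aco}}\{y_j/j : j\in\N\}.
\]
Since $y_j/j\to0$ in norm, the set $\{0\}\cup\{y_j/j\}$ is norm compact. Its closed absolutely convex hull $H$ is therefore norm compact, and in particular weakly compact.

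Now take a weakly compact, hence norm compact, set $W$ and some $\varepsilon>0$. The set $W$ lies in $rB_Y$ for some $r$, and it is covered by finitely many balls of radius $\varepsilon$ centred at points $w_1,\dots,w_p$. By density, each $w_i$ can be chosen of the form $r y_{j_i}$. The point $r y_{j_i}=r j_i\,(y_{j_i}/j_i)$ lies in $r j_i H$. Setting $n=\lceil r\max_i j_i\rceil$ and using that $H$ is absolutely convex, we get $W\subset nH+\varepsilon B_Y$. This is exactly \eqref{eq:strong-generation}.

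\emph{Finite direct sums.} Let $H_1$ strongly generate $Y_1$ and $H_2$ strongly generate $Y_2$. Equip $Y_1\oplus Y_2$ with the $\ell_1$-sum norm; any other choice of norm is handled by the isomorphism invariance below. Take $H=H_1\times H_2$. It is absolutely convex, and weakly compact because the weak topology of the sum is the product of the weak topologies.

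For a weakly compact $W$ in the sum, the coordinate projections $P_i$ are weakly continuous, so $P_1W$ and $P_2W$ are weakly compact. Choose $n_i$ with $P_iW\subset n_iH_i+(\varepsilon/2)B_{Y_i}$ and let $n=\max(n_1,n_2)$. Since $0\in H_i$ and $H_i$ is absolutely convex, $n_iH_i\subset nH_i$. Then
\[
W\subset P_1W\times P_2W\subset nH+\varepsilon B,
\]
where $B$ is the unit ball of the sum. Induction extends this to any finite number of summands.

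\emph{Isomorphism invariance.} Let $T\colon Y\to Z$ be an isomorphism and let $H$ strongly generate $Y$. Then $T(H)$ is absolutely convex, and it is weakly compact because $T$ is weak-to-weak continuous. For a weakly compact $W\subset Z$, the set $T^{-1}W$ is weakly compact, so $T^{-1}W\subset nH+(\varepsilon/\norm{T})B_Y$ for some $n$. Applying $T$ gives $W\subset nT(H)+\varepsilon B_Z$.

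None of these steps is a real obstacle. The only point needing care is in the Schur case: the weighting by $1/j$ is what makes $H$ compact while still letting multiples of $H$ approximate every compact set.
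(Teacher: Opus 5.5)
Your proof is correct and follows essentially the same route as the paper. Both use a norm-compact absolutely convex hull of a dense sequence with weights tending to zero (you take weights $1/j$ on a sequence dense in $B_Y$, the paper takes $2^{-j}$ on a sequence dense in the sphere), together with norm compactness of weakly compact sets. Both then use products of generating sets for finite sums (you use the $\ell_1$-norm with tolerance $\varepsilon/2$, the paper the maximum norm) and transport $H$ under an isomorphism. The only step you leave implicit is that passing from the Schur property to norm compactness of weakly compact sets uses Eberlein--\v Smulian, which the paper cites explicitly.
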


\begin{proof}
Let $Y$ be a non-zero separable Schur space and let $(y_j)$ be dense
in its unit sphere. The closed absolutely convex hull $H$ of
$\{2^{-j}y_j:j\in\N\}$ is norm compact and has dense linear span.
Since $H$ is absolutely convex, $\bigcup_{k\geqslant1}kH$ is norm
dense in $Y$. Every weakly compact set $W\subset Y$ is norm compact,
by the Schur property and Eberlein--\v Smulian. A finite
$\varepsilon/2$-net in $W$, followed by approximation of its points
in $\bigcup_k kH$, gives $W\subset kH+\varepsilon B_Y$ for some $k$.
The zero space is immediate. For a finite direct sum with the maximum
norm, take the product
of weakly compact absolutely convex generating sets and apply the
inclusions in \eqref{eq:strong-generation} to the coordinate projections of $W$.
Finally, an isomorphism transports a generating set and
changes only the required approximation tolerance.
\end{proof}

SWCG itself does not pass to closed subspaces: Mercourakis and
Stamati \cite[Corollary~3.10]{MS} construct a non-SWCG closed subspace
of $L_1[0,1]$. Thus the nonembedding conclusion in
Theorem~\ref{thm:product} uses the stronger fact that every subspace
of an SWCG space has (K), due to Avil\'es and Rodr\'iguez
\cite[Corollary~2.3]{AR}. Together with the finite-measure $L_1$
case of \cite{SW}, this also gives the following observation for
arbitrary measures and dual isomorphisms.

\begin{lemma}\label{lem:Lone}
Every separable closed subspace of any $L_1$-space is isomorphic to a subspace
of a finite-measure $L_1$-space, and hence has (K). If a separable Banach
space $X$ satisfies $X^*\cong\ell_\infty$, then $X$ has (K).
\end{lemma}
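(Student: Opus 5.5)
We treat the two assertions of Lemma~\ref{lem:Lone} separately. Both reduce to the finite-measure $L_1$ case, which is SWCG by \cite{SW}, combined with the theorem of Avil\'es and Rodr\'iguez \cite[Corollary~2.3]{AR} that every subspace of an SWCG space has (K). Property (K) is clearly inherited by isomorphic copies: an isomorphism $T$ maps weakly compact sets to weakly compact sets, and $T^*$ maps weak*-null sequences and their convex blocks to weak*-null sequences and their convex blocks. So in each part it suffices to embed the space in question isomorphically into some $L_1(\nu)$ with $\nu$ finite.

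\emph{First assertion (measure domination).} Let $X\subset L_1(\mu)$ be separable and closed, with $\mu$ arbitrary. Pick a dense sequence $(h_j)$ in $X$ and set $w=\sum_j 2^{-j}|h_j|/(1+\norm{h_j}_1)\in L_1(\mu)$, so $w\geqslant0$ and $\int w\,d\mu<\infty$. Let $S=\{w>0\}$; this set is $\sigma$-finite. Each $h_j$ vanishes $\mu$-a.e.\ outside $S$, and by density and norm closure every $h\in X$ does too. Define the finite measure $\nu=w\,\mu$ on $S$ and the map $\Phi h=h/w$. Then $\int_S|h/w|\,d\nu=\int_S|h|\,d\mu$, so $\Phi$ is an isometry from $X$ into $L_1(\nu)$. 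The only care needed is the a.e.\ support argument, which I regard as routine. Then $L_1(\nu)$ is SWCG, and \cite[Corollary~2.3]{AR} gives (K) for $\Phi(X)$, hence for $X$.

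\emph{Second assertion.} Suppose $X$ is separable and $X^*\cong\ell_\infty$. Then $X^{**}\cong\ell_\infty^*$. The space $\ell_\infty^*=C(\beta\N)^*$ is an abstract $L$-space, so by Kakutani it is isometric to $L_1(\mu)$ for some (non-$\sigma$-finite) measure $\mu$. The canonical embedding $X\hookrightarrow X^{**}$, composed with the isomorphism, realises $X$ as a separable closed subspace of an $L_1$-space up to isomorphism. The first assertion then applies.

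The step I expect to need the most justification is invoking Kakutani's representation, together with checking that the isomorphism $X^*\cong\ell_\infty$ dualises to an isomorphism $X^{**}\cong\ell_\infty^*$, which it does by taking adjoints. Alternatively, one can avoid Kakutani by working directly in $C(\beta\N)^*$, the space of regular Borel measures. Take $\lambda=\sum_j2^{-j}|\phi_j|/\norm{\phi_j}$ for a dense sequence $(\phi_j)$ in the image of $X$. By Radon--Nikod\'ym, every element of the closed span is absolutely continuous with respect to $\lambda$, which gives an isometric embedding into $L_1(\lambda)$ with $\lambda$ finite. This is the same measure-domination argument, and I would present it that way to keep the proof self-contained.
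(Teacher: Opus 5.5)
Your proof is correct and follows the paper's argument. The first part uses the same measure-domination isometry $h\mapsto h/w$, and your self-contained alternative for the second part is exactly the paper's proof: dominate a dense sequence in $M(\beta\N)$ by a finite positive measure and embed via Radon--Nikod\'ym derivatives. The Kakutani route is a valid but unnecessary repackaging of the same idea.

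Two cosmetic fixes. Normalise by $1+\norm{\phi_j}$ rather than $\norm{\phi_j}$, to avoid dividing by zero. Also, absolute continuity of every element of the closed subspace follows from domination plus preservation of absolute continuity under variation-norm limits, not from Radon--Nikod\'ym itself.
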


\begin{proof}
For a dense sequence $(u_j)$ in a separable subspace of $L_1(\mu)$, put
$w=\sum_{j\geqslant1}2^{-j}|u_j|/(1+\norm{u_j}_1)$ and
$d\nu=w\,d\mu$. Every element of that subspace vanishes off $\{w>0\}$,
and $u\mapsto u/w$, defined as zero when $w=0$, is an isometry into
$L_1(\nu)$, where $\nu$ is finite.

For the second part, identify $X$, up to isomorphism, with a
separable subspace $Y$ of $X^{**}\cong\ell_\infty^*=M(\beta\N)$.
For a dense sequence $(\nu_j)$ in $Y$, the finite positive measure
$\nu=\sum_{j\geqslant1}2^{-j}|\nu_j|/(1+\norm{\nu_j})$ dominates every
$\nu_j$. Absolute continuity is preserved under variation-norm limits,
so Radon--Nikod\'ym derivatives embed $Y$ isometrically into $L_1(\nu)$.
The subspace theorem for (K) completes the proof.
\end{proof}

The derivative map identifies $\Lipzero(I)$ isometrically with
$L_\infty(I)$; Pe{\l}czy\'nski's theorem \cite{Pelczynski} gives
$L_\infty(I)\cong\ell_\infty$. We shall also use that Schur spaces have
(K): their weakly compact sets are norm compact, and bounded weak*-null
sequences converge uniformly on norm-compact sets.
Recall also that the Dunford--Pettis property is equivalent to
$\langle x_j^*,x_j\rangle\to0$ whenever both $(x_j)\subset X$ and
$(x_j^*)\subset X^*$ are weakly null. In contrast, the dual sequences
in the definition of (K) are only assumed to be weak*-null.

\section{The product construction}

Passing to a compact subset and rescaling, it suffices to consider
compact $C\subset I$ with $\min C=0$, $\max C=1$ and $c=|C|>0$.
When normalising $C$, rescale the metric on $K$ by the same factor;
this rescaling leaves the distortion of the embedding unchanged.
Let
\[
 r_j(x)=(-1)^{\lfloor2^jx\rfloor}\quad(0\leqslant x<1),
 \qquad H_j(x)=\int_0^x\mathds{1}_C(s)r_j(s)\,ds.
\]
The Rademacher functions are orthonormal in $L_2(I)$. By Bessel's
inequality, applied to $\mathds{1}_{C\cap[0,x]}$, we have $H_j(x)\to0$
for every $x$, and their common Lipschitz bound $1$ implies
\begin{equation}\label{eq:primitive}
 \norm{H_j}_\infty\longrightarrow0.
\end{equation}

\begin{lemma}\label{lem:fibre}
There is a weakly null sequence $(\nu_j)$ in the unit sphere of
$\F(C)$ such that $\langle H_j|_C,\nu_j\rangle=1$ for all $j$.
\end{lemma}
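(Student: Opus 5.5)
We construct $\nu_j$ explicitly from the derivative of $H_j$ on $C$. The natural candidate comes from the isometric identification of $\F(I)$ with $L_1(I)$: the molecule $\delta_y-\delta_x$ for $x<y$ corresponds to $\mathds{1}_{[x,y]}$. The pairing with $\Lipzero(I)$ is $\langle f,u\rangle=\int_0^1 f'u$.

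Since $C$ is compact, $I\setminus C$ is a countable union of open intervals. We define the map $T\colon L_1(C)\to\F(C)$ as the continuous extension of $\mathds{1}_{[x,y]\cap C}\mapsto\delta_y-\delta_x$ for $x<y$ in $C$. This is well defined because $\F(C)$ is isometric to $L_1(C)$, with $\norm{\delta_y-\delta_x}=y-x$. One checks this isometry by the same argument as Godard's theorem for $I$, or through the retraction of $I$ onto $C$ that is affine on the gaps; its pullback composed with the extension identifies $\Lipzero(C)$ with $L_\infty(C)$. Concretely, a Lipschitz $f$ on $C$ extends affinely across the gaps to $\tilde f$. The map $f\mapsto \tilde f'|_C$ is an isometry from $\Lipzero(C)$ onto $L_\infty(C)$, but only with respect to the norm $\norm{f}=\norm{\tilde f'}_\infty$, since the affine pieces have slopes bounded by the Lipschitz constant. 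One must check that this is an isometry, not just a contraction. If that fails, we instead simply define $\nu_j$ and verify its norm directly.

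Set $\nu_j=T(r_j\mathds{1}_C)/c$. Then
\[
\langle H_j|_C,\nu_j\rangle=\frac1c\int_C r_j\cdot\tilde H_j'=\frac1c\int_C r_j^2=1,
\]
because $\tilde H_j$ agrees with $H_j$: $H_j$ is constant on the gaps, so its affine extension across them is itself, and $H_j'=\mathds{1}_Cr_j$. Also $\norm{\nu_j}\leqslant\norm{r_j\mathds{1}_C}_1/c=1$. Since $\norm{H_j|_C}_{\mathrm{Lip}}\leqslant1$, the pairing forces $\norm{\nu_j}=1$. So $\nu_j$ lies in the unit sphere without needing the full isometry; we only need $T$ to be a contraction. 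That holds because $\int_C|f'||u|$ bounds the pairing, or directly by density of step functions whose endpoints lie in $C$.

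For weak nullity, we have $T$ bounded linear and $r_j\mathds{1}_C\to0$ weakly in $L_1(C)$. The latter follows from the Riemann–Lebesgue-type fact that the Rademacher functions tend to zero weakly in $L_1$: they are bounded in $L_\infty$ and orthonormal, hence $\int r_j g\to0$ for every $g\in L_2$, and by density and uniform boundedness for every $g\in L_\infty$. Bounded operators preserve weak nullity, so $\nu_j\to0$ weakly.

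The main obstacle is defining $T$ rigorously. This means checking that a step function $\sum a_i\mathds{1}_{[x_i,y_i]\cap C}$ with endpoints in $C$ is sent to an element of norm at most its $L_1$ norm, and that such step functions are dense in $L_1(C)$. For the norm bound, we pair with an arbitrary $f\in B_{\Lipzero(C)}$ and use
\[
f(y)-f(x)=\int_x^y\tilde f'=\int_{[x,y]\cap C}\tilde f'+\sum_{\text{gaps}}(\text{affine increments}).
\]
The gap terms break this identity unless we choose a different map. The cleaner fix is to define instead
\[
\nu_j=\frac1c\sum_i a_i(\delta_{y_i}-\delta_{x_i}),
\]
approximating via the regular Borel measure $H_j'\,ds$, or to define $T$ on $L_1(I)$ by Godard and compose with the $1$-Lipschitz retraction $\pi\colon I\to C$ that maps each gap to its left endpoint. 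With the retraction approach, set $\nu_j=\widehat\pi(\text{Godard image of }r_j\mathds{1}_C)/c$. Then $\langle H_j|_C,\nu_j\rangle=\frac1c\int (H_j\circ\pi)'r_j\mathds{1}_C$. On $C$ the derivative $(H_j\circ\pi)'$ equals $H_j'=r_j$ almost everywhere, by Lebesgue density, so the pairing is $1$. The norm bound and weak nullity then pass through the bounded operator $\widehat\pi$. This retraction version is the one we would write out.
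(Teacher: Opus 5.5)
\textbf{There is a genuine gap, at the step you yourself flag as the main obstacle: the operator that carries $L_1$ (or $\F(I)$) into $\F(C)$.}

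Your final choice is the retraction $\pi\colon I\to C$ that sends each gap $(a,b)$ to $a$. It is not $1$-Lipschitz, and not even continuous. For $x\in(a,b)$ close to $b$ you have $|\pi(b)-\pi(x)|=b-a$, while $|b-x|\to0$. In fact no continuous retraction of $I$ onto $C$ exists unless $C=I$, because $I$ is connected and $C$ is not. This has three consequences:
\begin{itemize}
\item $\widehat\pi$ is unbounded on the span of evaluations, since $\norm{\widehat\pi(\delta_b-\delta_x)}=b-a$ while $\norm{\delta_b-\delta_x}=b-x$.
\item For general $f\in\Lipzero(C)$, the function $f\circ\pi$ jumps at right endpoints of gaps, so it is not Lipschitz.
\item Neither the bound $\norm{\nu_j}\leqslant1$ nor weak nullity can be transported through $\widehat\pi$. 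The identity $H_j\circ\pi=H_j$ only rescues the single pairing with $H_j$.
\end{itemize}

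Your earlier map $T\colon L_1(C)\to\F(C)$ fails for a related reason. For a gap $(a,b)$ you have $\mathds{1}_{[a,b]\cap C}=0$ in $L_1(C)$, while $\delta_b-\delta_a\neq0$, so $T$ is not even well defined. Likewise, $f\mapsto(\Lambda f)'|_C$ does not identify $\Lipzero(C)$ with $L_\infty(C)$. It forgets the slopes across the gaps, so it is not injective.

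The missing idea is to retract linearly, at the level of the free spaces, rather than by a point map. The affine extension $\Lambda\colon\Lipzero(C)\to\Lipzero(I)$, which you mention in passing, is a linear isometry. Its adjoint restricts to a norm-one operator $P\colon\F(I)\to\F(C)$ with
\[
P\delta_x=\tfrac{b-x}{b-a}\delta_a+\tfrac{x-a}{b-a}\delta_b\qquad\text{for }x\in(a,b).
\]
This is a barycentric retraction, not a pointwise one. Let $J\colon L_1(I)\to\F(I)$ be Godard's isometry and set $\nu_j=c^{-1}PJ(\mathds{1}_Cr_j)$. Then everything else you wrote goes through:
\begin{itemize}
\item $\norm{\nu_j}\leqslant1$, because $P$ and $J$ are contractions.
\item $(\nu_j)$ is weakly null, because $PJ$ is bounded and $\mathds{1}_Cr_j\to0$ weakly in $L_1(I)$. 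Alternatively, use Bessel's inequality directly against $\mathds{1}_C(\Lambda h)'\in L_2(I)$.
\item $\langle H_j|_C,\nu_j\rangle=c^{-1}\int_C r_j\,(\Lambda(H_j|_C))'=c^{-1}\int_C r_j^2=1$. This uses $\Lambda(H_j|_C)=H_j$, which holds because $H_j$ is constant on every gap.
\end{itemize}
This is how the paper proceeds.
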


\begin{proof}
We first describe the affine extension used in
\cite[Theorem~3.2]{Godard}. Since $C$ is compact and contains $0$ and $1$,
each component of $I\setminus C$ is an open interval $(a,b)$ with
$a,b\in C$. For $h\in\Lipzero(C)$, define $\Lambda h=h$ on $C$ and
\[
 (\Lambda h)(x)
 =\frac{b-x}{b-a}h(a)+\frac{x-a}{b-a}h(b)
 \qquad(a<x<b).
\]
This defines a linear operator $\Lambda:\Lipzero(C)\to\Lipzero(I)$.
On $[a,b]$, the slope of $\Lambda h$ is
$(h(b)-h(a))/(b-a)$, whose modulus is at most
$\norm{h}_{\mathrm{Lip}}$. If $x<y$ do not lie in the closure of one gap,
let $a$ and $b$ be the first and last points of $C\cap[x,y]$.
Applying the preceding estimate on $[x,a]$ and $[b,y]$, and the Lipschitz
bound for $h$ on $[a,b]$, gives
\[
 |(\Lambda h)(y)-(\Lambda h)(x)|
 \leqslant\norm{h}_{\mathrm{Lip}}
 \bigl((a-x)+(b-a)+(y-b)\bigr)
 =\norm{h}_{\mathrm{Lip}}(y-x).
\]
Since restriction to $C$ recovers $h$, $\Lambda$ is an isometry.
Its adjoint sends an evaluation at $x\in C$ to the same evaluation on
$C$, and for $x\in(a,b)\subset I\setminus C$ it satisfies
\[
 \Lambda^*\delta_x
 =\frac{b-x}{b-a}\delta_a+\frac{x-a}{b-a}\delta_b,
\]
where the evaluations on the right belong to $\F(C)$.
Thus $\Lambda^*$ maps the linear span of the evaluations on $I$ into
$\F(C)$ and, by continuity, restricts to a contraction
$P:\F(I)\to\F(C)$. The identity
$\langle h,P\mu\rangle=\langle\Lambda h,\mu\rangle$
for $h\in\Lipzero(C)$ and $\mu\in\F(I)$ gives $P^*=\Lambda$.

For $H\in\Lipzero(I)$, write $H'$ for its almost-everywhere derivative.
Such a function is absolutely continuous,
$H(x)=\int_0^x H'(s)\,ds$, and
$\norm{H'}_\infty=\norm{H}_{\mathrm{Lip}}$.
The interval representation in \cite[Corollary~3.3]{Godard} can therefore
be described directly by
\[
 \langle H,Ju\rangle=\int_0^1H'(x)u(x)\,dx
 \qquad(H\in\Lipzero(I),\ u\in L_1(I)).
\]
Indeed, $J\mathds{1}_{(a,b]}=\delta_b-\delta_a$ whenever
$0\leqslant a<b\leqslant1$. Interval step functions are dense in $L_1(I)$,
so the estimate $\norm{Ju}\leqslant\norm{u}_1$ shows that
$Ju\in\F(I)$ for every $u\in L_1(I)$.
Taking $H(x)=\int_0^x\operatorname{sgn}u(s)\,ds$ gives the reverse
inequality, hence $J:L_1(I)\to\F(I)$ is an isometry.
Its range is closed and contains every evaluation
$\delta_x=J\mathds{1}_{(0,x]}$, so it is onto.

Put $\nu_j=c^{-1}PJ(\mathds{1}_Cr_j)$.
Since $\norm{\mathds{1}_Cr_j}_1=c$, we have $\norm{\nu_j}\leqslant1$.
For every $h\in\Lipzero(C)$,
\[
 \langle h,\nu_j\rangle
 =c^{-1}\int_C r_j(x)(\Lambda h)'(x)\,dx\longrightarrow0
\]
by Bessel's inequality, since $\mathds{1}_C(\Lambda h)'\in L_2(I)$.
Thus $(\nu_j)$ is weakly null.
The function $H_j$ is constant on every gap of $C$, so
$\Lambda(H_j|_C)=H_j$. Consequently
\[
 \langle H_j|_C,\nu_j\rangle
 =c^{-1}\int_C r_j(x)^2\,dx=1.
\]
As $\norm{H_j|_C}_{\mathrm{Lip}}\leqslant1$, this also gives
$\norm{\nu_j}=1$.
\end{proof}

Let $K$ be a Cantor metric space. Fix a homeomorphism
$S\mapsto t_S$ from the subsets of $\N$, identified with
$\{0,1\}^{\N}$, onto $K$. For a finite binary word $\sigma$, define
the compact cylinder
\[
 K_\sigma=\{t_S:\mathds{1}_S|_{\{1,\ldots,|\sigma|\}}=\sigma\}.
\]
For $b\geqslant1$, put
\[
 d_b=\min_{\substack{|\sigma|=|\tau|=b\\\sigma\ne\tau}}
       \dist(K_\sigma,K_\tau)>0,
 \qquad \varepsilon_b=\min(2^{-b},d_1/2,\ldots,d_b/2).
\]
The sequence $(\varepsilon_b)$ decreases to zero. By
\eqref{eq:primitive}, choose strictly increasing indices $j_b$ with
$\norm{H_{j_b}}_\infty\leqslant\varepsilon_b$.
For a fixed $m$ and a subset $S\subset\N$, call the elements of
$S\cap\{m,m+1,\ldots\}$ the selected indices. Define on $C\times K$
\begin{equation}\label{eq:construction}
 f_m(x,t_S)=
 \begin{cases}
 H_{j_b}(x),& b=\min(S\cap\{m,m+1,\ldots\})\text{ exists},\\
 0,& S\cap\{m,m+1,\ldots\}=\varnothing.
 \end{cases}
\end{equation}
These functions vanish on $\{0\}\times K$, satisfy
$\norm{f_m}_\infty\leqslant\varepsilon_m$, and are $1$-Lipschitz in
the first coordinate. To check the second coordinate, suppose that
$S,U$ first differ at $b$. If $b\geqslant m$ and a selected index
lies in $[m,b)$, the first selected indices agree and the values in
\eqref{eq:construction} coincide. Otherwise all selected indices are
at least $b$; this also holds when $b<m$, since they are then at least
$m>b$. The difference is therefore at most
$2\varepsilon_b\leqslant d_b\leqslant d_K(t_S,t_U)$.
Going through $(x',t)$ now gives the explicit estimate
\begin{equation}\label{eq:productbound}
 |f_m(x,t)-f_m(x',u)|
 \leqslant |x-x'|+d_K(t,u)
 \leqslant\sqrt2\,d_2((x,t),(x',u)).
\end{equation}

\begin{proof}[Proof of Theorem~\ref{thm:product}]
Reduce $C$ to the compact normalised case above and $K$ to a Cantor
subspace. Let $\Phi:C\times K\to M$ be a bi-Lipschitz embedding, and
choose the base point $o=\Phi(0,t_\varnothing)$. Write $d_M$ for the
metric on $M$, put $Q=\Phi(C\times K)$, and define
$g_m=f_m\circ\Phi^{-1}$ on $Q$. By \eqref{eq:productbound}, these
functions have a common Lipschitz bound
$L=\sqrt2\,\norm{\Phi^{-1}}_{\mathrm{Lip}}$.
McShane's extension theorem \cite[Theorem~1.33]{Weaver} gives the
extensions
\[
 \widetilde F_m(p)=\inf_{q\in Q}\bigl(g_m(q)+L\,d_M(p,q)\bigr),
 \qquad p\in M.
\]
Here $L\,d_M(p,q)$ is the distance between $p$ and $q$ multiplied by
the constant $L$. The triangle inequality gives
$\widetilde F_m(p)\leqslant\widetilde F_m(p')+L\,d_M(p,p')$;
interchanging $p$ and $p'$ proves the Lipschitz bound $L$.
For $p\in Q$, the Lipschitz bound on $g_m$ gives
$g_m(p)\leqslant g_m(q)+L\,d_M(p,q)$ for every $q\in Q$, and taking
$q=p$ shows that $\widetilde F_m(p)=g_m(p)$.

To retain the uniform bound $\varepsilon_m$, define
\[
 F_m(p)=\max\{-\varepsilon_m,\min\{\varepsilon_m,\widetilde F_m(p)\}\}.
\]
This replaces values above $\varepsilon_m$ by $\varepsilon_m$ and
values below $-\varepsilon_m$ by $-\varepsilon_m$. The scalar map
$s\mapsto\max\{-\varepsilon_m,\min\{\varepsilon_m,s\}\}$ is
$1$-Lipschitz and fixes every point of
$[-\varepsilon_m,\varepsilon_m]$. Since $|g_m|\leqslant\varepsilon_m$
on $Q$, it follows that $F_m$ still extends $g_m$, vanishes at $o$,
and satisfies
$\norm{F_m}_{\mathrm{Lip}}\leqslant L$ and
$\norm{F_m}_\infty\leqslant\varepsilon_m$. In particular, $(F_m)$ is
weak*-null in $\Lipzero(M)$.

For each fixed $t\in K$, the bounded linear map
\[
 T_t:\F(C)\longrightarrow\F(M),\qquad
 T_t\delta_x=\delta_{\Phi(x,t)}-\delta_{\Phi(0,t)}
\]
has $\norm{T_t}\leqslant\norm{\Phi}_{\mathrm{Lip}}$ by the universal
linearisation property of Lipschitz-free spaces
\cite[Theorem~3.6]{Weaver}, and
sends $(\nu_j)$ to a weakly null sequence. Let $(G_k)$ be any convex
block subsequence of $(F_m)$, with notation as in \eqref{eq:blocks}.
Put $b_k=\max E_k$ and $t=t_{\{b_k:k\geqslant1\}}$. For every
$m\in E_k$, the least selected index at least $m$ is $b_k$.
Consequently
\[
 G_k(\Phi(x,t))=H_{j_{b_k}}(x),\qquad
 \langle G_k,T_t\nu_{j_{b_k}}\rangle=1.
\]
Lemma~\ref{lem:diagonal} proves that $\F(M)$ fails (K). Notice that
the same fibre is used for the whole block sequence.

The subspace theorem from \cite{AR} excludes an embedding into an
SWCG space. To exclude an embedding into an arbitrary $L_1$-space,
restrict it to the separable subspace $\F(Q)$. Applying the construction
with ambient space $Q$ shows that $\F(Q)$ fails (K), contradicting
Lemma~\ref{lem:Lone}. Finally, if $M$ is separable then so is $\F(M)$,
and the same lemma gives $\Lipzero(M)\not\cong\ell_\infty$.
\end{proof}

The construction also gives a quantitative estimate. Following
\cite[Section~3]{CKR}, for a bounded sequence $(g_k)\subset X^*$ put
\[
 \beta((g_k))=
 \sup_{\substack{(x_k)\subset B_X\\x_k\to0\ \mathrm{weakly}}}
 \limsup_k|\langle g_k,x_k\rangle|.
\]
The quantity $K_2(X)$ is the supremum, over weak*-null sequences in
$B_{X^*}$, of the infimum of $\beta$ over their convex block
subsequences. If $\Phi$ is as in the proof above and
$D=\norm{\Phi}_{\mathrm{Lip}}\norm{\Phi^{-1}}_{\mathrm{Lip}}$, then
\begin{equation}\label{eq:quantitative}
 K_2(\F(M))\geqslant D^{-1}.
\end{equation}
Indeed, for any $\eta>0$, choose the indices $j_b$ so that
$\norm{H_{j_b}}_\infty\leqslant\eta\varepsilon_b$.
The functions in \eqref{eq:construction} are then
$\eta$-Lipschitz in the second coordinate and have Lipschitz norm
at most $\sqrt{1+\eta^2}$. Run the extension construction with
$L_\eta=\sqrt{1+\eta^2}\norm{\Phi^{-1}}_{\mathrm{Lip}}$
and truncate the extensions to $[-\eta\varepsilon_m,\eta\varepsilon_m]$.
For every convex block sequence $(G_k)$ of these extensions, the
weakly null sequence
\[
 x_k=\frac{T_t\nu_{j_{b_k}}}{\norm{\Phi}_{\mathrm{Lip}}}
 \in B_{\F(M)}
 \quad\text{satisfies}\quad
 \left\langle\frac{G_k}{L_\eta},x_k\right\rangle
 =\frac1{\sqrt{1+\eta^2}D}.
\]
Letting $\eta\downarrow0$ gives \eqref{eq:quantitative}.
Since $K_2(X)\leqslant1$, we obtain $K_2(\F(M))=1$ whenever
the product embeds isometrically.

This proof also explains the significance of positive horizontal
measure. If a non-empty compact set $C\subset I$ has Lebesgue measure
zero, then $\mathds{1}_Cr_j=0$ in $L_1(I)$ and $H_j$ is identically zero.
Moreover, $\F(C)$ has the Schur property: if $C$ is infinite, it is
linearly isometric to $\ell_1$ by \cite[Corollary~3.4]{Godard},
whereas for finite $C$ it is finite-dimensional.
Thus $\F(C)$ contains no normalised weakly null sequence.
This explains why the fibre argument
requires $C$ to have positive measure; it does not show that this condition
is necessary for $\F(C\times K)$ to fail (K).

\begin{proof}[Proof of Theorem~\ref{thm:bundles}]
Each $\Z^k$ is countable and uniformly discrete, so $\F(\Z^k)$ is a
separable Schur space by \cite[Proposition~4.4]{Kalton}; it is therefore
SWCG by Lemma~\ref{lem:SWCG}.
This settles the case $n=0$, including the zero space if $m=0$.
For $n=1$ and $m\geqslant1$, \cite[Proposition~3.14]{AM} gives
\[
 \F(\R\times\Z^m)
 \cong\F(\R)\oplus\F(\Z^{m+1})
 \cong L_1(\R)\oplus\F(\Z^{m+1}),
\]
where the second isomorphism uses \cite[Corollary~3.3]{Godard}.
The first summand is isomorphic to $L_1[0,1]$ and hence is SWCG;
Lemma~\ref{lem:SWCG} applies again. The case $n=1,m=0$ is the same
$L_1$ observation. Every SWCG space has (K) by \cite{AR}.
If $n\geqslant2$, the underlying space contains an isometric copy of
$I^2$, so Theorem~\ref{thm:product} gives failure of (K).
Finally, $\F(\R^d)$ embeds into no SWCG space when $d\geqslant2$,
which proves the final part of the theorem.
\end{proof}

\section{Positive-measure Euclidean sets}

We include the following Eggleston-type argument for completeness;
see \cite{MRZ} for related product theorems and their history.

\begin{lemma}\label{lem:eggleston}
If $E\subset\R\times\R^d$, $d\geqslant1$, is Lebesgue measurable and
has positive measure, then $E$ contains $C\times K$, where $C\subset\R$
is compact with positive measure and $K\subset\R^d$ is a Cantor set.
\end{lemma}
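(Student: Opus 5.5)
We argue with Fubini and a Cantor-scheme construction in the fibre variable, using Lebesgue density. First pass to a compact subset of $E$ with positive measure, so $E$ is compact. For $y\in\R^d$ put $E^y=\{x\in\R:(x,y)\in E\}$. By Fubini, $y\mapsto|E^y|$ is measurable and $\int|E^y|\,dy=|E|>0$. Hence there is $\alpha>0$ such that $A=\{y:|E^y|>\alpha\}$ has positive $d$-dimensional measure. The target is a Cantor set $K\subset A$ for which $\bigcap_{y\in K}E^y$ still has positive measure. We then take $C$ to be that intersection, which is compact as an intersection of compact sets. Then $C\times K\subset E$ automatically.

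The naive choice of $K$ fails: different fibres can have nearly disjoint sections. The key step is therefore a measure-concentration argument that makes fibres close in $L_1$. Consider the map $y\mapsto\mathds 1_{E^y}\in L_1(\R)$. It is measurable and essentially separably valued, so Lusin's theorem gives a compact set $A'\subset A$ of positive measure on which it is continuous. Since $A'$ is uncountable, it is a compact set of positive measure without isolated points after removing countably many points, and we may assume it is perfect.

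We build $K$ by a Cantor scheme inside $A'$. At stage $b$ choose $2^b$ points $y_\sigma$, $|\sigma|=b$, in $A'$ with disjoint closed balls $B_\sigma$ of radius $<2^{-b}$. Each child ball sits inside its parent ball, and every ball meets $A'$ in a perfect set. By continuity, shrink the radii so that $\norm{\mathds 1_{E^y}-\mathds 1_{E^{y_\sigma}}}_1<\alpha 4^{-b-2}$ for all $y\in B_\sigma\cap A'$. Let $K=\bigcap_b\bigcup_{|\sigma|=b}(B_\sigma\cap A')$. This is a Cantor set in $A'$.

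The remaining step, and the main obstacle, is the positive measure of $\bigcap_{y\in K}E^y$: uncountably many fibres must be intersected. Control it through a countable dense subset of $K$ and closedness. Since $E$ is closed, $\bigcap_{y\in K}E^y$ equals $\bigcap_{y\in D}E^y$ up to what can fail to be closed. Rather than rely on this, argue directly. For $x$ to lie outside $E^y$ for some $y\in K$, we need $x\notin E^{y_\sigma}$ for the nearby centres along a branch. Compactness of $E$ shows that $x\in\bigcap_\sigma E^{y_\sigma}$ implies $x\in E^y$ for every $y\in K$, because $(x,y_{\sigma})\to(x,y)$ along the branch of $y$. So $C=\bigcap_\sigma E^{y_\sigma}$, a countable intersection. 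Its complement within $E^{y_\varnothing}$ has measure at most $\sum_b\sum_{|\sigma|=b}\norm{\mathds 1_{E^{y_\sigma}}-\mathds 1_{E^{y_{\sigma^-}}}}_1\le\sum_b2^b\alpha4^{-b-1}<\alpha/2$. Hence $|C|>\alpha/2>0$. The delicate point is ordering the choices so that each child centre is chosen within the tolerance prescribed by its parent. This is arranged by first fixing $y_\sigma$ and then choosing the children inside the continuity neighbourhood at level $b+1$.
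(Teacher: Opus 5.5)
Your proof is correct and is essentially the paper's proof. You use Lusin continuity of $y\mapsto\mathds{1}_{E^y}\in L_1(\R)$ on a perfect compact parameter set, and a Cantor scheme in which each child lies within $\alpha4^{-b-2}=\alpha2^{-2b-4}$ of its parent (the paper's constant). You then take $C=\bigcap_\sigma E^{y_\sigma}$, bound the measure lost edge by edge, and use closedness of $E$ along branches to get $C\times K\subset E$.

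There are two harmless slips. First, the centres $y_\sigma$ need not lie in $K$, so you only have $\bigcap_\sigma E^{y_\sigma}\subset\bigcap_{y\in K}E^y$, not equality; simply define $C$ as the countable intersection, as the paper does. Second, a ball need not meet a perfect set in a perfect set; what the construction actually uses is that the centres are non-isolated points of the Cantor--Bendixson perfect kernel of $A'$.
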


\begin{proof}
By inner regularity, replace $E$ by a compact subset of positive measure.
For $y\in\R^d$, set $E_y=\{x:(x,y)\in E\}$ and
$\Psi(y)=\mathds{1}_{E_y}\in L_1(\R)$. The map $\Psi$ is measurable:
its distances to a countable dense family in $L_1(\R)$ are measurable
by Fubini. Fubini and Lusin give a compact parameter set $P_0$ of
positive measure on which $\Psi$ is continuous and every $E_y$ has
positive measure. Replace $P_0$ by the support of Lebesgue measure
restricted to $P_0$. The resulting compact set $P$ has no isolated
points and retains both properties.

Choose $y_\varnothing\in P$ and let $a=|E_{y_\varnothing}|>0$.
By continuity and the absence of isolated points, choose inductively
$y_s\in P$ and relatively open neighbourhoods $U_s$ for finite binary
words $s$, with $y_s\in U_s$, nested closures, disjoint sibling
closures, diameters tending uniformly to zero, and
\[
 \norm{\Psi(y_{si})-\Psi(y_s)}_1
 <a\,2^{-2|s|-4}\qquad(i=0,1).
\]
Here and below closures are taken in $P$. Then
\[
 C=\bigcap_s E_{y_s},\qquad
 K=\bigcap_{j\geqslant0}\ \bigcup_{|s|=j}\overline{U_s}
\]
are compact, and the nested construction makes $K$ a Cantor set.
Every point lost from the root fibre is lost at some edge of a finite
branch, whence
\[
 |C|\geqslant a-\sum_{s,i}|E_{y_s}\setminus E_{y_{si}}|
 \geqslant a-\sum_{j\geqslant0}2^{j+1}a2^{-2j-4}
 =3a/4>0.
\]
For $y\in K$, the points $y_s$ along its branch converge to $y$.
Closedness of $E$ then gives $C\times\{y\}\subset E$.
\end{proof}

\begin{proof}[Proof of Theorem~\ref{thm:measure}]
Apply Lemma~\ref{lem:eggleston} to $\overline E$, with $d=n-1$,
and then Theorem~\ref{thm:product}. Since
$\F(E)=\F(\overline E)$ canonically, all the conclusions follow.
\end{proof}

Aliaga \cite[Theorem~3.7 and Corollary~3.8]{Aliaga} proves
$\Lipzero(E)\cong\Lipzero(\R^n)$ whenever $E\subset\R^n$ is non-porous,
in particular whenever it has positive measure. That dual isomorphism
alone does not identify the weak* topologies determined by the
canonical preduals. Theorem~\ref{thm:measure} establishes failure of
(K) directly for those preduals.
Aliaga's subsequent theorem \cite[Theorem~1]{AliagaFree} gives a
complemented copy of $\F(\Z^n)$ in $\F(E)$ whenever $E$ is non-porous.
It does not yield failure of (K), since $\F(\Z^n)$ is Schur.
Indeed, the positive-measure hypothesis in Theorem~\ref{thm:measure}
cannot be replaced by non-porosity: $\Z^n$ is non-porous in the
all-scales sense of \cite[Definition~2.1]{Aliaga}, but its Lipschitz-free
space is SWCG. That definition requires a fixed $\lambda>0$ such
that every ambient ball $B(z,R)$, at every scale $R>0$, contains
a ball of radius $\lambda R$ disjoint from the set.
Every point of $\R^n$ is within $\sqrt n/2$ of $\Z^n$, so every ball
of radius greater than $\sqrt n/2$ meets $\Z^n$. Taking
$\lambda R>\sqrt n/2$ rules out the required hole. This is compatible
with the porosity of $\Z^n$ at small scales.

Candido, C\'uth and Doucha \cite[Theorem~1.16]{CCD} proved
$\Lipzero(\Z^d)\cong\Lipzero(\R^d)$ for every $d\geqslant1$.
The non-isomorphism of the canonical preduals was already known:
$\F(\Z^d)$ is Schur, whereas $\F(\R^d)$ contains a copy of $L_1$
coming from a line. For $d\geqslant2$, the present argument also
distinguishes these preduals by property (K).

\section{An example of Assouad dimension one}

For a metric space $M$, its Assouad dimension is the infimum
of the exponents $s$ for which there is $A_s<\infty$ such that
\[
 N_r(B(p,R)\cap M)\leqslant A_s(R/r)^s
 \qquad(p\in M,\ 0<r<R\leqslant\diam M),
\]
where $\diam M$ may be infinite and $N_r$ is the least number of
balls of radius $r$ needed to cover
the indicated set.

\begin{proof}[Proof of Theorem~\ref{thm:assouad}]
Starting with $[0,1]$, at stage $j\geqslant1$ retain the two endpoint
subintervals of relative length $a_j=2^{-j-2}$ in each remaining
interval. Let $K_*$ be the resulting Cantor set. The intervals at level
$j$ have length
$L_j=\prod_{i=1}^j a_i=2^{-j(j+5)/2}$, with $L_0=1$.

We verify that $\dimA K_*=0$. Fix $\varepsilon>0$ and choose $J$ so that
$a_j^\varepsilon\leqslant1/2$ for $j\geqslant J$. For $0<r<R<1$, choose
$k,n$ with $L_k\leqslant R<L_{k-1}$ and
$L_n\leqslant r<L_{n-1}$. Then $n\geqslant k$. Distinct level-$(k-1)$
intervals are separated by gaps of length at least $6L_{k-1}>2R$;
for $k=1$ there is just one such interval. Thus a ball of radius $R$
meets at most one, and
$N_r(B(p,R)\cap K_*)\leqslant2^{n-k+1}$. If $n\leqslant k+1$, this
is at most $4$. Otherwise,
\[
 2^{n-k-1}\leqslant 2^J
 \left(\prod_{j=k+1}^{n-1}a_j^{-1}\right)^\varepsilon
 =2^J\left(\frac{L_k}{L_{n-1}}\right)^\varepsilon
 \leqslant2^J(R/r)^\varepsilon.
\]
The same global estimate treats $R=1$, so $\dimA K_*=0$. Product covers
give $\dimA(I\times K_*)\leqslant1$, and the contained interval gives
the reverse inequality. Set $M=I\times K_*$ and apply
Theorem~\ref{thm:product}.

Since $M$ is a compact subset of the plane, \cite{KoP} gives $(V^*)$
and weak sequential completeness, while \cite[Corollary~2]{Mason}
gives the Dunford--Pettis property.

Finally, suppose that $f:\{0,\ldots,N\}^2\to M$ satisfies
\[
 \lambda|p-q|\leqslant d(f(p),f(q))\leqslant D\lambda|p-q|
 \qquad(p,q\in\{0,\ldots,N\}^2)
\]
for some $\lambda>0$ and fixed $D$. Its $(N+1)^2$ image points are
$\lambda$-separated and lie in a ball of radius $\sqrt2D\lambda N$.
The Assouad estimate with exponent $3/2$ and $r=\lambda/3$ gives
\[
 (N+1)^2\leqslant A(3\sqrt2DN)^{3/2}.
\]
If the displayed outer radius exceeds $\diam M$, use $\diam M$
instead. The constant $A$ depends only on $M$, so $N$ is bounded for
each fixed $D$.
\end{proof}

The dimension in Theorem~\ref{thm:assouad} is minimal even among
complete metric spaces. If $\dimA M<1$, every bounded ball in $M$
has zero $\mathcal H^1$-measure, and a countable covering by such
balls gives $\mathcal H^1(M)=0$. Thus $M$ is purely
$1$-unrectifiable, and \cite[Theorem~C]{AGPP} gives the Schur
property and (K).

For context, Kochanek and Perneck\'a \cite{KoP} establish $(V^*)$
and weak sequential completeness for Lipschitz-free spaces over
compact subsets of superreflexive spaces. Aliaga, Perneck\'a and
Quero \cite[Theorem~C]{APQ} prove $(V^*)$ over Hilbert spaces;
as this property passes to closed subspaces, it holds for all our
Euclidean examples. More generally, Jung, Petitjean, Proch\'azka
and Quilis \cite[Proposition~6.1(ii)]{JPPQ} obtain $(V^*)$ for
metric spaces of finite Nagata dimension, including doubling spaces.
Mason's theorem \cite[Corollary~2]{Mason} gives the Dunford--Pettis
property for every subset of a finite-dimensional Euclidean space.
The example above therefore has all three properties despite failing (K).

\section{Further questions}

Let $\Delta$ be the middle-third Cantor set. Both $I\times\Delta$ and
the product in Theorem~\ref{thm:assouad} are porous in the plane.
Indeed, all retention ratios are at most $1/3$. For a point of either
Cantor set and $0<r<4$, the first level interval of length at most $r/4$
has a parent with a middle gap longer than $r/12$, at distance at most
$r/4$ from the point. This gives a complementary planar ball of radius
$r/48$ inside the ball of radius $r$ centred at the corresponding
product point. Larger scales follow from boundedness. For an arbitrary
ambient ball $B(z,R)$, either $B(z,R/2)$ misses the product or it contains
a product point to which this construction applies with radius $R/2$.
Thus one may take porosity constant $1/96$.

Porosity, rather than measure zero alone, places these products outside
Aliaga's non-porosity theorem. We know that
$\Lipzero(I\times\Delta)\not\cong\ell_\infty$,
but the argument does not decide whether
$\Lipzero(I\times\Delta)\cong\Lipzero(I^2)$. A negative answer would
disprove the line-or-plane dichotomy in \cite[Question~3]{Aliaga};
a positive answer would supply a porous example for
\cite[Question~4]{Aliaga}.
The recent theorem of Aliaga, C\'uth and Vico
\cite[Theorem~1.1]{ACV} makes every Lipschitz-free space the strongly
unique \emph{isometric} predual of its Lipschitz space. This does not
require arbitrary Banach space isomorphisms of Lipschitz spaces to
preserve their canonical weak* topologies, and therefore does not
settle the question above. The example $\Z^2$ versus $\R^2$ already
shows that such an inference would fail.

\begin{question}
If $M\subset\R^2$ is compact and $\F(M)$ fails (K), must $M$ contain
a bi-Lipschitz copy of $C\times K$ as in Theorem~\ref{thm:product}?
\end{question}

Compactness is a substantive restriction here: a countable dense
subset of the square has the same Lipschitz-free space as the square,
but cannot contain such an uncountable product.

\begin{question}
Can the construction give explicit lower bounds for the $L_1$-distortion
of finite-dimensional subspaces of $\F(I\times K)$ in terms of the
separation scales of $K$?
\end{question}

The estimate \eqref{eq:quantitative} quantifies failure of (K), but
does not by itself give finite-dimensional $L_1$-distortion bounds.
In particular, it does not recover the grid estimates of \cite{NS}.

\section*{Use of artificial intelligence}

This project originated in 2019. ChatGPT (OpenAI) was subsequently
used to assess whether our preliminary results could address more
recent problems, notably Aliaga's question concerning the isomorphism
of $\F(\R\times\Z)$ and $\F(\R^2)$
\cite[Question~5]{Aliaga}, also recorded in
\cite[Problem~2]{Mason}, and his questions about the isomorphism
types of Lipschitz spaces over planar sets and the r\^ole of porosity
\cite[Questions~3 and~4]{Aliaga}.
It also assisted with literature searches and the revision of the
exposition. We retain
responsibility for the mathematical content and references.

\end{document}